%
\RequirePackage{etoolbox}
\csdef{input@path}{{style/}{graphics/}}
\makeatletter
\input{arxiv-vmsta.cfg}
\makeatother
\documentclass[numbers,compress]{vmsta}
\usepackage{dcolumn}
\usepackage{enumitem}
\newtheorem{theorem}{Theorem}
\theoremstyle{definition}
\newtheorem{remark}{Remark}
\newtheorem{assumption}{Assumption}

\volume{1}
\pubyear{2014}
\firstpage{181}
\lastpage{194}
\doi{10.15559/15-VMSTA15}

\startlocaldefs

\newcommand{\rrvert}{\vert}
\newcommand{\llvert}{\vert}
\urlstyle{rm}
\allowdisplaybreaks
\endlocaldefs

\begin{document}
\begin{frontmatter}

\title{Asymptotics for functionals of powers of~a~periodogram}
\author{\inits{L.}\fnm{Lyudmyla}\snm{Sakhno}}\email{lms@univ.kiev.ua}
\address{Taras Shevchenko National University of Kyiv, 01601 Kyiv, Ukraine}

\markboth{L. Sakhno}{Asymptotics for functionals of powers of a~periodogram}

\begin{abstract}
We present large sample properties and conditions for asymptotic
normality of linear functionals of powers of the periodogram
constructed with the use of tapered data.
\end{abstract}

\begin{keyword}
Asymptotic normality\sep periodogram\sep tapered data
\MSC[2010] 60F05\sep 62M15\sep 60G10
\end{keyword}
\received{22 November 2014}
\accepted{8 January 2015}
\publishedonline{4 February 2015}
\end{frontmatter}

\section{Introduction}\label{sec1}

Consider a real-valued measurable zero-mean strictly stationary process
$Y(t)$, $t\in Z$, obeying the following assumption.

\begin{assumption}\label{assump1}
$E|Y(t)|^{k}<\infty$ for all $k$, and $Y(t)$ has (cumulant)
spectral \mbox{densities} of orders $k=2,3,\dots$, that is, there exist the
functions $f_{k}(\lambda _{1},\ldots,\lambda_{k-1})\in
L_{1}(\varLambda^{k-1})$, $\varLambda =(-\pi ,\pi]$, $k=2,3,\ldots$,
such that the cumulant function of order~$k$ is given by
\begin{equation*}
c_{k}(t_{1},\ldots,t_{k-1})=\int
_{\varLambda^{k-1}}f_{k}(\lambda_{1},\ldots,\lambda
_{k-1})e^{i\Sigma_{1}^{k-1}\lambda
_{j}t_{j}}
\,d\lambda_{1}\ldots d\lambda_{k-1}.
\end{equation*}
\end{assumption}

Suppose that we are given the observations $\{Y(t),t{\in} K_{T}\}$, where
$K_{T}=\{-T,\ldots,\allowbreak T\}$, $T\in Z$.

In this paper, we will study large sample properties of the empirical spectral
functionals of the form
%
\begin{equation}
J_{k,T}(\varphi) =\int_{\varLambda}\varphi(\lambda)
I_{T}^{k}(\lambda)\,d\lambda\label{jkt}
\end{equation}
for appropriate functions $\varphi(\lambda)$ with
$\varphi(\lambda)f_{2}^{k}(\lambda)\in L_{1}(\varLambda)$, where $I_{T}^{k}(\lambda)$
is the $k$th power of the periodogram based on the tapered data
$\{h_{T}(t)Y(t),t\in K_{T}\}$, and $k$ is\vadjust{\eject} a positive integer. The taper
will be of the form $h_{T}(t)=h(\frac{t}{T})$ with $h$
satisfying the following standard assumption.

\def\theassumption{H}
\begin{assumption}
\label{assumpH} The function $h(t),t\in R$,
is an even positive
function of bounded variation with bounded support: $h(t)=0$ for $|t|>1$.
\end{assumption}

The periodogram corresponding to the tapered data is defined as

\begin{equation*}
I_{T}(\lambda) =\frac{1}{2\pi H_{2,T}(0)} \bigl|d_{T}(\lambda)
\bigr|^{2},\quad\lambda\in\varLambda, \label{2.3w}
\end{equation*}
where $d_{T}(\lambda)$ is the finite Fourier transform based
on tapered data:
\begin{equation*}
d_{T}(\lambda) =d_{T}^{h}(\lambda) =\sum
_{t\in
K_{T}}e^{-i\lambda t}h_{T}(t)Y(t),\quad
\lambda\in\varLambda, \label{2.2q}
\end{equation*}%
\begin{equation*}%
H_{k,T}(\lambda) =\int_{K_T }h_{T}^{k}(t)e^{-i\lambda t}\,dt,
\end{equation*}
and we suppose that $H_{2,T}(0)\neq0$.

Functionals of the form \eqref{jkt} for $k=1$ have been
extensively studied in the literature, in particular, due to their
applications for parameter estimation in the spectral domain:
their behavior as $T\rightarrow\infty$ is important for
establishing asymptotic properties of so-called minimum contrast
estimators such as Whittle and Ibragimov estimators (see, e.g.,
\cite{LS,AnhLS2007b}, and references therein). The case of
the squared periodogram was treated, for example, in
\cite{DeoChen}, with application to a goodness-of-fit statistics,
and in \cite{Sakhno}, with application to minimum contrast
estimation.

Asymptotic results for the functionals of the form \eqref{jkt}
with general $k\geq2$ were studied in \cite{Chiu} and applied
to derivation of properties of weighted least squares estimators
in the frequency domain and also in \cite{McElroy}, with several
applications discussed, in particular, a frequency domain
goodness-of-fit testing.

In this paper, we derive asymptotic results for functionals~\eqref{jkt} with general $k\geq2$ in a more general setting,
using the tapered data, and under a~different set of conditions
in comparison with those used in \cite{Chiu} and \cite{McElroy};
in the Gaussian case, we state our results in terms of integrability
conditions for the spectral density and weight function.
Methods for the proofs are similar to those used in \cite{Sakhno}
with appropriate modifications required for the more general case
under consideration in the present paper.

\vspace*{3pt}
\section{Results and discussion}\label{sec2}
\vspace*{3pt}

We begin with the following assumptions.

\def\theassumption{\arabic{assumption}}
\setcounter{assumption}{1}

\begin{assumption}\label{assump2} The spectral densities $f_{k}(\lambda_{1},\ldots,\lambda_{k-1})$, $k=2,3,\dots$, of the stochastic
process $Y(t)$ are bounded and continuous.
\end{assumption}
\begin{assumption}\label{assump3}
The weight function $\varphi(\lambda)$ is bounded and continuous.
\end{assumption}

In what follows, we denote the second-order spectral density
$f_{2}(\lambda) $ simply by $f(\lambda)$ omitting the
subscript 2.

\begin{theorem}\label{thm1}
Let Assumptions \ref{assump1}, \ref{assump2}, and \ref{assumpH} hold,
and let the functions
$\varphi,\varphi_{1}(\lambda),\dots,\allowbreak\varphi_{k}(\lambda)$
satisfy Assumption \ref{assump3}. Then, as
$T\rightarrow\infty$,
\begin{enumerate}[leftmargin=0em,itemindent=1.75em,label={\rm(\arabic*)}]
\item\label{thm1.1}\mbox{~}\vspace*{-4.4mm}
\begin{equation*}
EJ_{k,T}(\varphi) \rightarrow k!\int_{\varLambda}\varphi
(\lambda) f^{k}(\lambda)\, d\lambda;
\end{equation*}
\item\label{thm1.2}\mbox{~}\vspace*{-7mm}
\begin{align*}
&\mathit{cov} \bigl(J_{k,T}(\varphi_{1}),J_{l,T}(\varphi_{2})\bigr)
\\
&\quad{}
\sim\frac{2\pi
}{T}e(h)klk!l! \biggl[ \int_{\varLambda}\varphi
_{1}(\lambda) \bigl[\overline{{\varphi}_{2}}(\lambda) +
\overline{{\varphi}_{2}}(-\lambda) \bigr]f^{k+l}(\lambda)\,d
\lambda
\\
&\qquad{}+ \int_{\varLambda^{2}}\varphi_{1}(\lambda
_{1})\overline{\varphi_{2}}(\lambda_{2})f^{k-1}(\lambda_{1})f^{l-1}(\lambda
_{2})f_{4}(\lambda_{1},-\lambda
_{1},\lambda_{2})\,d\lambda_{1}\,d\lambda
_{2} \biggr] ,
\end{align*}
where
\begin{equation*}
e(h)= \biggl\{ \int h^{2}(t)\,dt \biggr\} ^{-2}\int
h^{4}(t)\,dt;
\end{equation*}
\item\label{thm1.3}\mbox{~}\vspace*{-4.4mm}
\[
\mathit{cum}\bigl(J_{m_{1},T}(\varphi_{1}),\ldots,J_{m_{k},T}(\varphi_{k})\bigr) =O\bigl(T^{1-k}\bigr).
\]
\end{enumerate}
\end{theorem}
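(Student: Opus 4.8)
The plan is to reduce all three assertions to a single computation of cumulants of multilinear forms in the finite Fourier transform $d_T$, and then to evaluate the resulting kernel integrals asymptotically. Since $Y$ is real and $h_T$ is real and even, $\overline{d_T(\lambda)}=d_T(-\lambda)$, so that
\[
I_T^k(\lambda) = \frac{1}{(2\pi H_{2,T}(0))^k}\, d_T(\lambda)^k d_T(-\lambda)^k,
\]
and $J_{k,T}(\varphi)$ is, up to the factor $(2\pi H_{2,T}(0))^{-k}$, an integral over $\lambda$ of a product of $2k$ factors $d_T(\pm\lambda)$. The first step is therefore to apply the product (Leonov--Shiryaev) formula for cumulants, which expresses $\mathit{cum}(J_{m_1,T},\dots,J_{m_k,T})$ as a sum over indecomposable partitions of the array of all $2(m_1+\cdots+m_k)$ frequency arguments, each summand being a product of cumulants $\mathit{cum}(d_T(\cdot),\dots,d_T(\cdot))$ over the blocks of the partition, integrated against the weights $\varphi_i$.

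The second step is to express each elementary cumulant $\mathit{cum}(d_T(\mu_1),\dots,d_T(\mu_n))$ through the spectral density $f_n$ and taper-dependent kernels. Writing $d_T(\mu_j)=\sum_{t_j}h_T(t_j)e^{-i\mu_j t_j}Y(t_j)$ and inserting the spectral representation of $c_n$ from Assumption~\ref{assump1}, each such cumulant becomes an integral of $f_n$ against a product of Fej\'er-type kernels $\sum_t h_T(t)e^{-it(\cdot)}$ that enforce, approximately, the constraint that the frequencies in the block sum to zero. The analytic facts I would isolate as lemmas are that $H_{k,T}(0)=\int_{K_T}h_T^k(t)\,dt\sim T\int h^k(t)\,dt$, and that these kernels concentrate mass of order $T$ in a band of width $T^{-1}$ about the constraint surface, so that against the continuous functions $f_n$ and $\varphi_i$ (Assumptions~\ref{assump2} and \ref{assump3}) they act as approximate delta functions. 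Continuity is exactly what licenses evaluating $f$, $f_4$, and $\varphi_i$ at the limiting frequencies.

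With these tools, each part is a matter of identifying the surviving partitions and counting powers of $T$. For \ref{thm1.1}, the leading contribution to $EI_T^k(\lambda)$ comes from the $k!$ pairings of the $k$ arguments $\lambda$ with the $k$ arguments $-\lambda$; each pair contributes $\mathit{cum}(d_T(\lambda),d_T(-\lambda))\sim 2\pi H_{2,T}(0)f(\lambda)$, the normalization cancels, and one obtains $k!\,f^k(\lambda)$, while all blocks of size $\ge 3$ and all mismatched pairings are of lower order. For \ref{thm1.2}, the covariance equals the sum over indecomposable partitions linking the two groups; the survivors are the all-pairs partitions with exactly one cross-link (producing the $f^{k+l}$ term, with the combinatorial factor $klk!l!$ and the constant $e(h)=\{\int h^2\}^{-2}\int h^4$ emerging from the ratio of kernel integrals) and the partitions carrying a single fourth-order block across the groups (producing the $f_4$ term), both of order $T^{-1}$. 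For \ref{thm1.3}, one shows that every indecomposable partition connecting all $k$ groups forces at least $k-1$ constraints beyond the diagonal, so that the product of kernel masses, after the $T^{-(m_1+\cdots+m_k)}$ normalization, is $O(T^{1-k})$.

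The main obstacle is the power-counting in the last step: making precise that indecomposability of a partition connecting $k$ groups costs exactly $k-1$ factors of $T$ relative to the fully diagonal pairing. This requires careful bookkeeping of the number of free integration variables remaining after the Fej\'er kernels impose their approximate linear constraints on the frequencies, uniformly over all admissible partitions, together with uniform bounds on the kernel integrals that do not rely on Gaussianity. The subtlety is that tapering changes the constants but not the orders, so the estimates must be carried through with the kernels $H_{k,T}$ in place of the untapered Dirichlet kernels, and the boundedness of $f_n$ for all $n$ (Assumption~\ref{assump2}) is what keeps the lower-order partitions controlled.
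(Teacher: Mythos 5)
Your proposal follows essentially the same route as the paper's proof: the Leonov--Shiryaev product formula summed over indecomposable partitions, the spectral representation of $\mathit{cum}(d_T(\cdot),\dots,d_T(\cdot))$ in terms of Fej\'er-type kernels acting as approximate identities against bounded continuous $f_n$ and $\varphi_i$ (with $H_{k,T}(0)\sim T\int h^k(t)\,dt$), and power counting of how many kernels each partition allows one to compose, which yields the $T^{r-k}$, $T^{-1}$, and $T^{1-k}$ orders in parts (1), (2), (3) respectively. The only slip is cosmetic: the leading pair-partitions in part (2) involve two cross-pairs, e.g.\ $(\alpha,\beta)$ together with $(-\alpha,-\beta)$, rather than ``exactly one cross-link,'' but your combinatorial count $klk!l!$ and the constant $e(h)$ are the correct ones.
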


Suppose now that the process $Y(t)$ is Gaussian. In this case the
above asymptotic results can be stated under the conditions of integrability
on the weight function and spectral density.

\begin{theorem}\label{thm2}
Let $Y(t)$, $t\in Z$, be a Gaussian stationary process with spectral
density $f(\lambda)$, $\lambda\in\varLambda$, such that $f(\lambda
) \in L_{p}(\varLambda)$, and let the functions $\varphi,\varphi
_{1},\ldots,\varphi_{k}\in L_{q}(\varLambda)$, where $1\leq p,q\leq
\infty$.
Suppose also that Assumption~\ref{assumpH} holds.
\begin{enumerate}[leftmargin=0em,itemindent=1.75em,label={\rm(\arabic*)}]
\item\label{thm2.1}
If $p$ and $q$ satisfy the relation
\begin{equation*}
\frac{1}{q}+k\frac{1}{p}=1,
\end{equation*}
then, as $T\rightarrow\infty$,
\begin{equation*}
EJ_{k,T}(\varphi) \rightarrow k!\int_{\varLambda}\varphi
(\lambda) f^{k}(\lambda) \,d\lambda.
\end{equation*}

\item\label{thm2.2}
If $p$ and $q$ satisfy the relation
\begin{equation*}
\frac{1}{q}+\frac{k+l}{2}\cdot\frac{1}{p}=\frac{1}{2},
\end{equation*}
then, as $T\rightarrow\infty$,
\begin{eqnarray*}
&&\mathit{cov} \bigl(J_{k,T}(\varphi_{1}),J_{l,T}(\varphi_{2})\bigr)
\\
&&\quad{} \sim\frac{2\pi
}{T}e(h)klk!l!\int_{\varLambda}\varphi
_{1}(\lambda) \bigl[ \overline{\varphi_{2}}(\lambda) +
\overline{\varphi_{2}}(-\lambda) \bigr] f^{k+l}(\lambda)\,d
\lambda.
\end{eqnarray*}

\item\label{thm2.3}
If $p$ and $q$ satisfy the relation
\begin{equation*}
\frac{1}{q}+k\frac{1}{p}=\frac{1}{2},
\end{equation*}

then the cumulants of orders $r\geq3$ of the normalized functionals
$J_{k,T}(\varphi_{i})$ tend to zero as $T\rightarrow\infty$:
\begin{equation*}
\mathit{cum} \bigl(T^{1/2}J_{k,T}(\varphi_{1}),
\ldots,T^{1/2}J_{k,T}(\varphi_{r})\bigr)
\rightarrow0.
\end{equation*}

\item\label{thm2.4} If $p$ and $q$ satisfy the relation
\begin{equation*}
\frac{1}{q}+\frac{k_{1}+\cdots+k_{r}}{r}\cdot\frac{1}{p}=\frac{1}{2},
\end{equation*}

then the cumulant of $r$th order, $r\geq3$, of the normalized
functionals $J_{k_{i},T}(\varphi_{i})$, $i=1,\ldots,r$, tends to zero as
$T\rightarrow\infty$:
\begin{equation*}
\mathit{cum} \bigl(T^{1/2}J_{k_{1},T}(\varphi_{1}),
\ldots,T^{1/2}J_{k_{r},T}(\varphi_{r})\bigr)
\rightarrow0.
\end{equation*}
\end{enumerate}
\end{theorem}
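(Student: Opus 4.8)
\emph{Proof plan.} The starting point is that for a Gaussian process all cumulant spectral densities of order $\ge 3$ vanish, $f_{k}\equiv 0$ for $k\ge 3$; in particular the $f_{4}$ term present in Theorem~\ref{thm1}\ref{thm1.2} disappears, which is exactly why the covariance limit in part~\ref{thm2.2} retains only the $f^{k+l}$ integral. Moreover, each periodogram $I_{T}(\lambda)$ is a quadratic form in the Gaussian Fourier transform $d_{T}$, so every joint moment and cumulant of the functionals $J_{k,T}(\varphi_{i})$ is, by the Wick/Isserlis formula, a \emph{finite} sum over pairing diagrams. Each diagram contributes an integral over a power of $\varLambda$ of a product of copies of $f=f_{2}$ and of the tapered Fej\'er kernel
\[
\Delta_{T}(\alpha)=\frac{1}{2\pi H_{2,T}(0)}\bigl|H_{1,T}(\alpha)\bigr|^{2},\qquad \int_{\varLambda}\Delta_{T}=1,
\]
(and of shifts thereof), tested against the weights $\varphi_{i}$. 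Consequently $EJ_{k,T}(\varphi)$ is linear in $\varphi$ and homogeneous of degree $k$ in $f$; the covariance is bilinear in $(\varphi_{1},\overline{\varphi_{2}})$ and homogeneous of degree $k+l$ in $f$; and the $r$th cumulant is $r$-linear in $(\varphi_{1},\dots,\varphi_{r})$ and homogeneous of degree $k_{1}+\cdots+k_{r}$ in $f$.

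The analytic core is a single uniform estimate. The kernel $\Delta_{T}$ is an approximate identity obeying the norm scalings $\|\Delta_{T}\|_{L_{s}(\varLambda)}=O(T^{1-1/s})$, $1\le s\le\infty$. Inserting these into a generalized H\"older/Young inequality, I would bound each diagram integral by a product of the norms $\|\varphi_{i}\|_{q}$, a power of $\|f\|_{p}$, and a power of $T$. The exponent relation stated in each part is precisely the balance that renders the bound finite and fixes the power of $T$: it gives $T^{0}$ in~\ref{thm2.1}, the power $T^{-1}$ matching the prefactor $2\pi/T$ in~\ref{thm2.2}, and the power $T^{1-r}$ in~\ref{thm2.3}--\ref{thm2.4}, which the normalization $T^{r/2}$ (a factor $T^{1/2}$ per functional) converts into $T^{1-r/2}\to0$ for $r\ge3$. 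For instance, in~\ref{thm2.1} the leading ``diagonal'' diagram yields $k!\int_{\varLambda}\varphi f^{k}$, finite exactly when $\frac1q+\frac kp=1$; in~\ref{thm2.2} the connected diagrams yield $\frac{2\pi}{T}e(h)klk!l!\int_{\varLambda}\varphi_{1}[\overline{\varphi_{2}}(\lambda)+\overline{\varphi_{2}}(-\lambda)]f^{k+l}$, finite exactly when $\frac2q+\frac{k+l}{p}=1$, i.e. the relation in~\ref{thm2.2}; here the constant $e(h)$ emerges from the quartic overlap $\int|H_{1,T}|^{4}$ of the kernels.

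With these bounds in hand I would close the argument by density. Choose nonnegative bounded continuous $f^{(n)}\to f$ in $L_{p}(\varLambda)$ and bounded continuous $\varphi_{i}^{(n)}\to\varphi_{i}$ in $L_{q}(\varLambda)$. Each $f^{(n)}$ is the spectral density of a Gaussian process meeting Assumptions~\ref{assump1} and~\ref{assump2}, and the $\varphi_{i}^{(n)}$ meet Assumption~\ref{assump3}, so Theorem~\ref{thm1} (with $f_{4}\equiv 0$) supplies the claimed limits for the approximants, the $O(T^{1-r})$ rate of Theorem~\ref{thm1}\ref{thm1.3} giving the vanishing of the normalized cumulants. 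By the multilinear structure above, the difference between the functional evaluated at $(f,\varphi)$ and at $(f^{(n)},\varphi^{(n)})$ telescopes into finitely many terms, each carrying one factor $\|f-f^{(n)}\|_{p}$ or $\|\varphi_{i}-\varphi_{i}^{(n)}\|_{q}$ and otherwise controlled by the uniform estimate; hence it is small \emph{uniformly in} $T$. A standard interchange of the limits $T\to\infty$ and $n\to\infty$ then transfers each conclusion from the approximants to $(f,\varphi)$.

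The main obstacle is the uniform estimate itself. For general $r$ and $k_{1},\dots,k_{r}$ one must organize the Gaussian diagram expansion, isolate the leading connected diagrams that carry the stated limit, and show that every remaining diagram is of strictly lower order in $T$ after normalization --- all of this at the level of $L_{p}$--$L_{q}$ norms, since the kernels are only approximate identities and $f,\varphi$ are merely integrable, so no pointwise argument is available. Getting the off-diagonal diagrams to vanish under the $T^{1/2}$ normalization in~\ref{thm2.3}--\ref{thm2.4}, and extracting $e(h)$ correctly in~\ref{thm2.2}, is where the real care is required.
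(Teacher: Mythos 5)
Your proposal is correct and follows essentially the same route as the paper: for Gaussian data only pair partitions survive in the cumulant expansion, each diagram is controlled by the H\"older--Young--Brascamp--Lieb inequality together with the taper-kernel scalings $\Vert H_{1,T}\Vert_{r}\leq CT^{1-1/r}$ (equivalent to your $\Vert\Delta_{T}\Vert_{s}=O(T^{1-1/s})$, noting that the connected diagrams involve products of $H_{1,T}$ with unpaired arguments, so the bounds must be run at the level of $H_{1,T}$ rather than of $\Delta_{T}$ alone), the stated exponent relations are exactly the HYBL balance fixing the power of $T$, and the passage from bounded continuous $f,\varphi_{i}$ to general $L_{p}$/$L_{q}$ functions is precisely the approximation argument that the paper delegates to \cite{Sakhno}. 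One side remark in your sketch is off, though it does not affect the structure: the constant $e(h)$ arises from the factor $2\pi H_{4,T}(0)/H_{2,T}(0)^{2}$ with $H_{4,T}(0)=\int_{K_{T}}h_{T}^{4}(t)\,dt\sim T\int h^{4}(t)\,dt$ (a time-domain quartic moment of the taper), not from $\int_{\varLambda}|H_{1,T}(\lambda)|^{4}\,d\lambda$, which is of order $T^{3}$.
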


As corollaries of the above theorems, we obtain the next asymptotic normality
results.

Let us fix the weight functions $\varphi_{1},\ldots,\varphi_{m}$ and denote
\begin{equation*}
J_{T}= \bigl\{J_{k,T}(\varphi_{i}) \bigr
\}_{i=1,\ldots,m}\quad\mbox{\rm and}\quad \tilde{J}= \bigl\{
\tilde{J}_{k}(\varphi_{i}) \bigr\}_{i=1,\ldots,m},
\label{J}
\end{equation*}
where
$\tilde{J}_{k}(\varphi)=k!\int_{\varLambda}\varphi(\lambda) f^{k}(\lambda)\,d\lambda$.

Let $\xi=\{\xi_{i}\}_{i=1,\ldots,m}$ be a Gaussian random vector with zero
mean and second-order moments
\begin{align*}
w_{ij}
&{}=E\xi_{i}\bar{\xi_{j}}\\
&{}=2\pi e(h)(kk!) ^{2} \biggl(\int_{\varLambda}\varphi_{i}(\lambda) \bigl[\bar{\varphi}_{j}(\lambda) +\bar{
\varphi}_{j}(-\lambda) \bigr]f^{2k}(\lambda) \,d\lambda
\notag
\\
&\quad{}+\int_{\varLambda^{2}}\varphi_{i}(\lambda _{1}
) \bar{\varphi}_{j}(\lambda_{2})f^{k-1}(\lambda_{1})f^{k-1}(\lambda _{2})f_{4}(\lambda_{1},-\lambda _{1},
\lambda_{2})\,d\lambda_{1}\,d\lambda _{2} \biggr).
\label{2.7}
\end{align*}

\begin{assumption}\label{assump4}
The spectral density of the second
order $f(\lambda)$, the weight function $\varphi(\lambda)$, and the
taper $h$ are such that $T^{1/2}(EJ_{k,T}(\varphi)-\tilde{J_k}(\varphi
))\rightarrow0$.
\end{assumption}

\begin{theorem}\label{thm3}
Let Assumptions \ref{assump1}, \ref{assump2}, and \ref{assumpH} hold, and let the functions $\varphi_{i}$,
$i=1,\ldots,m
$, satisfy Assumption \ref{assump3}. Then
\begin{equation*}
T^{1/2}(J_{T}-EJ_{T})\overset{D} {\rightarrow}
\xi\quad\text{as }T\rightarrow\infty; \label{2.7a}
\end{equation*}
moreover, if Assumption \ref{assump4} holds for the functions $\varphi_{i}$,
$i=1,\ldots,m$, then
\begin{equation*}
T^{1/2}(J_{T}-\tilde{J})\overset{D} {\rightarrow}\xi
\quad \text{as }T\rightarrow\infty. \label{2.7b}
\end{equation*}
\end{theorem}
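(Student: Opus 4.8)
The plan is to derive the asymptotic normality (Theorem~\ref{thm3}) from the moment and cumulant asymptotics already established in Theorem~\ref{thm1}, using the classical method of moments/cumulants for proving convergence in distribution. Concretely, I would show that all joint cumulants of the normalized vector $T^{1/2}(J_T - EJ_T)$ converge to the corresponding cumulants of the Gaussian limit $\xi$, and then invoke the fact that a multivariate normal distribution is determined by its moments (hence convergence of all cumulants to Gaussian cumulants forces convergence in distribution).

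First I would record what the limiting cumulants must be: for a centered Gaussian vector $\xi=\{\xi_i\}$ all cumulants of order $r\geq 3$ vanish, and the second-order cumulants are exactly the $w_{ij}$ prescribed before the theorem. The work therefore splits into three matching steps. \emph{Step one (means):} since we center at $EJ_T$, the first-order cumulants of $T^{1/2}(J_T-EJ_T)$ are identically zero, matching $E\xi_i=0$. \emph{Step two (covariances):} I would compute $\operatorname{cov}\bigl(T^{1/2}J_{k,T}(\varphi_i),\,T^{1/2}J_{k,T}(\varphi_j)\bigr) = T\cdot\operatorname{cov}\bigl(J_{k,T}(\varphi_i),J_{k,T}(\varphi_j)\bigr)$ and apply part~\ref{thm1.2} of Theorem~\ref{thm1} with $k=l$, $\varphi_1=\varphi_i$, $\varphi_2=\varphi_j$. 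The factor $T$ cancels the $1/T$ in the covariance asymptotics, and the resulting limit is precisely $w_{ij}$ as defined in the statement. This requires care with the complex conjugation and the symmetry in $\pm\lambda$, but it is a direct substitution. \emph{Step three (higher cumulants):} for any $r\geq 3$ and any choice of indices I would bound the $r$th joint cumulant of the normalized functionals using multilinearity:
\begin{equation*}
\operatorname{cum}\bigl(T^{1/2}J_{k,T}(\varphi_{i_1}),\ldots,T^{1/2}J_{k,T}(\varphi_{i_r})\bigr) = T^{r/2}\,\operatorname{cum}\bigl(J_{k,T}(\varphi_{i_1}),\ldots,J_{k,T}(\varphi_{i_r})\bigr).
\end{equation*}
By part~\ref{thm1.3} the inner cumulant is $O(T^{1-r})$, so the product is $O(T^{r/2+1-r})=O(T^{1-r/2})$, which tends to zero for every $r\geq 3$. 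Hence all higher cumulants vanish in the limit.

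Combining the three steps, every joint cumulant of $T^{1/2}(J_T-EJ_T)$ converges to the matching cumulant of $\xi$; since the limit law is Gaussian and thus moment-determinate, the cumulant method yields $T^{1/2}(J_T-EJ_T)\overset{D}{\to}\xi$. For the second assertion I would write $T^{1/2}(J_T-\tilde J) = T^{1/2}(J_T-EJ_T) + T^{1/2}(EJ_T-\tilde J)$ and observe that Assumption~\ref{assump4} makes the deterministic bias term $T^{1/2}(EJ_{k,T}(\varphi_i)-\tilde J_k(\varphi_i))$ converge to $0$ coordinatewise; Slutsky's theorem then transfers the limit from the centered version to the version centered at $\tilde J$. \textbf{The main obstacle} I anticipate is not conceptual but technical: verifying rigorously that convergence of all cumulants suffices here (i.e., confirming the Gaussian limit satisfies Carleman's moment-determinacy condition, which it does) and, more delicately, ensuring the cumulant estimate of order $r$ in part~\ref{thm1.3} is genuinely \emph{uniform} enough that the $O(T^{1-r})$ bound holds with constants independent of the growing structure as $r$ varies — the real content was already packed into Theorem~\ref{thm1}, so Theorem~\ref{thm3} is essentially an assembly argument resting on those estimates.
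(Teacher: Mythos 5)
Your proposal is correct and is essentially the paper's own argument: the paper offers no separate proof of Theorem~\ref{thm3}, presenting it (and Theorems~\ref{thm4}--\ref{thm6}) explicitly as corollaries of Theorem~\ref{thm1} via the method of cumulants, which is precisely your assembly --- covariances from part~\ref{thm1.2} with $k=l$ giving $w_{ij}$, higher cumulants $O(T^{1-r/2})\to 0$ for $r\geq 3$ from part~\ref{thm1.3} by multilinearity, and Slutsky together with Assumption~\ref{assump4} for the recentering at $\tilde J$. Your worry about uniformity in $r$ is unfounded: the cumulant method only requires convergence of each fixed-order cumulant, since the Gaussian law is moment-determinate.
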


Let $\zeta=\{\zeta_{i}\}_{i=1,\ldots,m}$ be a Gaussian random vector with
zero mean and second-order moments
\begin{equation*}
v_{ij}=E\zeta_{i}\bar{\zeta_{j}}=2\pi e(h)(kk!
) ^{2}\int_{\varLambda}\varphi_{i}(\lambda)
\bigl[\bar{\varphi}_{j}(\lambda) +\bar{ \varphi}_{j}(-
\lambda) \bigr]f^{2k}(\lambda) \,d\lambda.
\end{equation*}

\begin{theorem}\label{thm4}
Let $Y(t)$, $t\in Z$, be a Gaussian stationary process with spectral
density $f(\lambda)\in L_{p}(\varLambda)$, and let the functions
$\varphi
_{1},\ldots,\varphi_{m}\in L_{q}(\varLambda)$, where $1\leq p,q\leq
\infty$, be
such that $\frac{1}{q}+k\frac{1}{p}=\frac{1}{2}$. Suppose also that
Assumption~\ref{assumpH} holds. Then
\begin{equation*}
T^{1/2}(J_{T}-EJ_{T})\overset{D} {\rightarrow}
\zeta\quad \text{as }T\rightarrow\infty;
\end{equation*}
moreover, if Assumption~\ref{assump4} holds for the functions $\varphi_{i}$,
$i=1,\ldots,m$, then
\begin{equation*}
T^{1/2}(J_{T}-\tilde{J})\overset{D} {\rightarrow}\zeta
\quad\text{as }T\rightarrow\infty.
\end{equation*}
\end{theorem}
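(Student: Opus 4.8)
The plan is to prove this by the method of cumulants, exactly along the lines one would use for Theorem~\ref{thm3}, but invoking Theorem~\ref{thm2} (valid under integrability rather than boundedness hypotheses) in place of Theorem~\ref{thm1}, and exploiting the fact that for a Gaussian process all cumulant spectral densities of order $\geq 3$ vanish, so that $f_{4}\equiv 0$ and the covariance $w_{ij}$ of Theorem~\ref{thm1}\ref{thm1.2} collapses to $v_{ij}$. Since the limit $\zeta$ is Gaussian, it suffices to show that the cumulants of the complex random vector $X_{T}=T^{1/2}(J_{T}-EJ_{T})$ converge to those of $\zeta$: the first-order cumulants vanish, the second-order cumulants converge to $v_{ij}$, and every joint cumulant of order $r\geq 3$ converges to zero.

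The first-order cumulants of $X_{T}$ are zero by centering. For the second-order cumulants I would apply Theorem~\ref{thm2}\ref{thm2.2} with $l=k$: the hypothesis $\frac{1}{q}+\frac{k+l}{2}\cdot\frac{1}{p}=\frac{1}{2}$ becomes precisely the assumed relation $\frac{1}{q}+k\frac{1}{p}=\frac{1}{2}$, and since $f_{4}\equiv 0$ the asymptotic covariance is exactly
\[
T\,\mathit{cov}\bigl(J_{k,T}(\varphi_{i}),J_{k,T}(\varphi_{j})\bigr)\rightarrow 2\pi e(h)(kk!)^{2}\int_{\varLambda}\varphi_{i}(\lambda)\bigl[\bar{\varphi}_{j}(\lambda)+\bar{\varphi}_{j}(-\lambda)\bigr]f^{2k}(\lambda)\,d\lambda=v_{ij}.
\]
For the higher-order cumulants I would use Theorem~\ref{thm2}\ref{thm2.3}, whose hypothesis $\frac{1}{q}+k\frac{1}{p}=\frac{1}{2}$ is again exactly the assumed relation: it gives $\mathit{cum}(T^{1/2}J_{k,T}(\varphi_{i_{1}}),\ldots,T^{1/2}J_{k,T}(\varphi_{i_{r}}))\rightarrow 0$ for every $r\geq 3$, which are precisely the joint cumulants of order $r$ of $X_{T}$ by the multilinearity of cumulants.

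With all cumulants thus controlled, convergence in distribution $X_{T}\overset{D}{\rightarrow}\zeta$ follows from the standard multivariate cumulant (method-of-moments) criterion: the Gaussian law is determined by its moments, so convergence of all cumulants forces convergence in law. Equivalently, passing to real and imaginary parts and invoking the Cram\'er--Wold device, one reduces to the scalar statement for each linear combination $\sum_{i}c_{i}X_{T,i}$, whose cumulants inherit the same behaviour by multilinearity. The second assertion then follows by writing $T^{1/2}(J_{T}-\tilde{J})=T^{1/2}(J_{T}-EJ_{T})+T^{1/2}(EJ_{T}-\tilde{J})$ and noting that the second term tends to zero componentwise by Assumption~\ref{assump4}, so Slutsky's theorem yields the same Gaussian limit $\zeta$.

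The main obstacle is the rigorous justification of the cumulant method for these complex-valued functionals. Each $J_{k,T}(\varphi_{i})$ is a polynomial of degree $2k$ in the Gaussian data (the periodogram being quadratic in $d_{T}$), hence lies in a fixed finite sum of Wiener chaoses uniformly in $T$; this finite-chaos structure, or alternatively a uniform bound on the growth of the cumulants in the spirit of Theorem~\ref{thm2}\ref{thm2.4}, is what guarantees that pointwise convergence of cumulants upgrades to convergence in distribution. Beyond citing Theorem~\ref{thm2}, the only genuine verification required is to confirm that the single relation $\frac{1}{q}+k\frac{1}{p}=\frac{1}{2}$ simultaneously matches the exponent conditions in parts \ref{thm2.2} and \ref{thm2.3}, which it does once $l=k$ and $k_{1}=\cdots=k_{r}=k$.
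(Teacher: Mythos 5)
Your proposal is correct and is essentially the paper's own route: the paper states Theorem~\ref{thm4} as a direct corollary of Theorem~\ref{thm2}, i.e.\ the method of cumulants with part~\ref{thm2.2} (taking $l=k$, so the exponent relation reduces to $\frac{1}{q}+k\frac{1}{p}=\frac{1}{2}$) giving the limiting covariances $v_{ij}$, part~\ref{thm2.3} killing all cumulants of order $r\geq 3$, and Assumption~\ref{assump4} plus Slutsky handling the recentering by $\tilde{J}$. The only refinement you add beyond the paper's implicit argument is the explicit discussion of why cumulant convergence upgrades to convergence in law for these complex-valued functionals, which the paper leaves unstated.
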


We next state more general results, the joint asymptotic normality for
functionals of different powers of the periodogram.

Consider
\begin{equation*}
J_{k_{1},\ldots,k_{m},T}= \bigl\{J_{k_{i},T}(\varphi_{i}) \bigr\}_{i=1,\ldots,m}
\quad\mbox{\rm and}\quad
\tilde{J}_{k_{1},\ldots,k_{m}}= \bigl\{
\tilde{J}_{k_{i}}(\varphi_{i}) \bigr\}_{i=1,\ldots,m},
\end{equation*}
where $\tilde{J}_{k}(\varphi)=k!\int_{\varLambda}\varphi(\lambda) f^{k}(\lambda)\,d\lambda$.

Let $\tilde{\xi}=\{\tilde{\xi}_{i}\}_{i=1,\ldots,m}$ be a Gaussian random
vector with zero mean and second-order moments
\begin{align*}
\tilde{w}_{ij}
&{}=E\tilde{\xi}_{i}\overline{\tilde{\xi}_{j}}\\
&{}=2\pi e(h)k_{i}k_{i}!k_{j}k_{j}!
\biggl(\int_{\varLambda}\varphi_{i}(\lambda) \bigl[
\bar{\varphi}_{j}(\lambda) +\bar{\varphi}_{j}(-\lambda)\bigr]f^{k_{i}+k_{j}}(\lambda) \,d\lambda\notag
\\
&\quad{}+\int_{\varLambda^{2}}\varphi_{i}(\lambda _{1}
) \bar{\varphi}_{j}(\lambda_{2})f^{k_{i}-1}(\lambda_{1})f^{k_{j}-1}(\lambda _{2})f_{4}(\lambda_{1},-\lambda _{1},
\lambda_{2})\,d\lambda_{1}\,d\lambda _{2} \biggr).
\end{align*}

\begin{theorem}\label{thm5}
Let Assumptions \ref{assump1}, \ref{assump2}, and \ref{assumpH} hold, and let the functions $\varphi_{i}$,
$i=1,\ldots,m$, satisfy Assumption \ref{assump3}. Then
\begin{equation*}
T^{1/2}(J_{k_{1},\ldots,k_{m},T}-EJ_{k_{1},\ldots,k_{m},T})\overset{D} {\rightarrow}
\xi\quad \text{as }T\rightarrow\infty;
\end{equation*}
moreover, if Assumption~\ref{assump4} holds with $k=k_{i}$, $\varphi= \varphi_{i}$, $i=1,\ldots,m$, then
\begin{equation*}
T^{1/2}(J_{k_{1},\ldots,k_{m},T}-\tilde{J}_{k_{1},\ldots,k_{m}})\overset{D} {
\rightarrow}\xi\quad \text{as }T\rightarrow\infty.
\end{equation*}
\end{theorem}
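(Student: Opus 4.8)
The plan is to establish the limit by the method of cumulants. A Gaussian law is uniquely determined by its first two moments and has all cumulants of order $\ge 3$ equal to zero; hence, if every cumulant of the normalized vector converges to the corresponding cumulant of the Gaussian vector $\tilde{\xi}$ (the one with covariance $\tilde{w}_{ij}$), convergence in distribution follows. Set $X_{i,T}=T^{1/2}(J_{k_i,T}(\varphi_i)-EJ_{k_i,T}(\varphi_i))$ and $X_T=\{X_{i,T}\}_{i=1,\dots,m}$. By the Cram\'er--Wold device together with the multilinearity of cumulants, it suffices to control the joint cumulants of the components $X_{i,T}$; the first cumulants vanish by centering, so only the second-order and higher-order joint cumulants remain. (In the complex case one works with real and imaginary parts, equivalently with the full set of complex joint cumulants, all of which reduce to the quantities below.)

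For the covariances I would invoke Theorem~\ref{thm1}\ref{thm1.2}, which already permits two distinct powers. The covariance is the cumulant of order two, and the stated asymptotics carry a factor $T^{-1}$ with a limit free of $T$, so the normalization by $T^{1/2}$ in each factor gives
\begin{equation*}
\mathit{cov}(X_{i,T},X_{j,T})=T\,\mathit{cov}\bigl(J_{k_i,T}(\varphi_i),J_{k_j,T}(\varphi_j)\bigr)\rightarrow \tilde{w}_{ij}
\end{equation*}
as $T\rightarrow\infty$, which is exactly the $(i,j)$ entry of the covariance of $\tilde{\xi}$; the pseudo-covariances $E\tilde{\xi}_i\tilde{\xi}_j$ are obtained likewise by applying Theorem~\ref{thm1}\ref{thm1.2} with $\varphi_j$ replaced by $\bar{\varphi}_j$, since the periodogram power is real. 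No new estimate is needed, because Theorem~\ref{thm1}\ref{thm1.2} is stated for generic powers $k$ and $l$.

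For cumulants of order $r\ge 3$ I would use Theorem~\ref{thm1}\ref{thm1.3}, which gives $\mathit{cum}(J_{k_{i_1},T}(\varphi_{i_1}),\dots,J_{k_{i_r},T}(\varphi_{i_r}))=O(T^{1-r})$ for arbitrary, possibly distinct, powers. Multilinearity of cumulants then yields
\begin{equation*}
\mathit{cum}(X_{i_1,T},\dots,X_{i_r,T})=T^{r/2}\,O\bigl(T^{1-r}\bigr)=O\bigl(T^{1-r/2}\bigr)\rightarrow 0,
\end{equation*}
since $1-r/2<0$ for $r\ge 3$. Combined with the convergence of the second-order cumulants, this gives $X_T\overset{D}{\rightarrow}\tilde{\xi}$, the first assertion.

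For the second assertion I would write $T^{1/2}(J_{k_1,\dots,k_m,T}-\tilde{J}_{k_1,\dots,k_m})=X_T+T^{1/2}(EJ_{k_1,\dots,k_m,T}-\tilde{J}_{k_1,\dots,k_m})$. Applying Assumption~\ref{assump4} componentwise with $k=k_i$ and $\varphi=\varphi_i$ drives the deterministic shift to zero, so Slutsky's theorem yields the same limit $\tilde{\xi}$. Given Theorem~\ref{thm1}, the genuine analytic difficulty (the covariance asymptotics and the $O(T^{1-r})$ cumulant bound) is already resolved, so the main thing to verify here is that the underlying combinatorial (diagram) computations behind Theorem~\ref{thm1}\ref{thm1.2}--\ref{thm1.3} do not tacitly assume equal powers. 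Since those parts are formulated for arbitrary $k,l$ and $m_1,\dots,m_k$, the passage from the equal-power case (Theorem~\ref{thm3}) to the mixed-power case is essentially a matter of notation, and this is the only point where I expect any bookkeeping rather than a real obstacle.
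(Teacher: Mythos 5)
Your proposal is correct and matches the paper's intended argument: the paper gives no separate proof of Theorem~\ref{thm5}, presenting it (like Theorems~\ref{thm3}, \ref{thm4}, and~\ref{thm6}) as a corollary of Theorem~\ref{thm1} via exactly the cumulant method you describe --- second-order cumulants converge by Theorem~\ref{thm1}\ref{thm1.2}, which is already stated for distinct powers $k,l$; cumulants of order $r\geq3$ vanish after normalization since $T^{r/2}O(T^{1-r})=O(T^{1-r/2})\rightarrow0$ by Theorem~\ref{thm1}\ref{thm1.3}; and Assumption~\ref{assump4} together with Slutsky's theorem handles the recentering. Note only that the limit vector in the statement of Theorem~\ref{thm5} should be $\tilde{\xi}$ (the Gaussian vector with second-order moments $\tilde{w}_{ij}$) rather than $\xi$, an apparent typo in the paper that your proof implicitly corrects.
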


Let $\tilde{\zeta} =\{\tilde{\zeta} _{i}\}_{i=1,\ldots,m}$ be a
Gaussian random
vector with zero mean and second-order moments
\begin{equation*}
\tilde{v}_{ij}=E\tilde{\zeta} _{i}\overline{\tilde{\zeta}
_{j}}=2\pi e(h)k_{i}k_{i}!k_{j}k_{j}!
\int_{\varLambda}\varphi_{i}(\lambda) \bigl[\bar{
\varphi}_{j}(\lambda) +\bar{\varphi}_{j}(-\lambda)\bigr]f^{k_{i}+k_{j}}(\lambda) \,d\lambda.
\end{equation*}

\begin{theorem}\label{thm6}
Let $Y(t)$, $t\in Z$, be a Gaussian stationary process with a spectral
density $f(\lambda)\in L_{p}(\varLambda)$, and let the functions
$\varphi
_{1},\ldots,\varphi_{m}\in L_{q}(\varLambda)$, where $1\leq p,q\leq
\infty$, be
such that $\frac{1}{q}+\min\{k_{i}\}\frac{1}{p}=\frac{1}{2}$. Suppose also
that Assumption~\ref{assumpH} holds. Then
\begin{equation*}
T^{1/2}(J_{k_{1},\ldots,k_{m},T}-EJ_{k_{1},\ldots,k_{m},T})\overset{D} {\rightarrow}
\tilde{\zeta}\quad \text{as }T\rightarrow\infty;
\end{equation*}
moreover, if Assumption~\ref{assump4} holds with $k=k_{i}$, $\varphi= \varphi_{i}$, $i=1,\ldots,m$, then
\begin{equation*}
T^{1/2}(J_{k_{1},\ldots,k_{m},T}-\tilde{J}_{k_{1},\ldots,k_{m}})\overset{D} {
\rightarrow}\tilde{\zeta}\quad \text{as }T\rightarrow\infty.
\end{equation*}
\end{theorem}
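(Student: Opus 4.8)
The plan is to prove the joint convergence by the method of cumulants, exactly paralleling the proof of Theorem~\ref{thm5} but with Theorem~\ref{thm2} in the Gaussian $L_p$--$L_q$ setting replacing Theorem~\ref{thm1}. The key principle is that a sequence of random vectors converges in distribution to a Gaussian vector provided its joint cumulants converge to those of the limit; since the Gaussian law is determined by its first two cumulants (all joint cumulants of order $r\ge 3$ vanishing), it suffices to verify that the second-order joint cumulants of the coordinates of $T^{1/2}(J_{k_1,\ldots,k_m,T}-EJ_{k_1,\ldots,k_m,T})$ converge to $\tilde v_{ij}$ and that all joint cumulants of order $r\ge 3$ tend to $0$. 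The centering by $EJ$ kills the first-order cumulants, so these are the only two ingredients.

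First I would treat the second-order cumulants. By multilinearity of cumulants, the $(i,j)$ entry is $T\,\mathit{cov}\bigl(J_{k_i,T}(\varphi_i),J_{k_j,T}(\varphi_j)\bigr)$, whose limit is supplied by part~(2) of Theorem~\ref{thm2}. The decisive simplification in the Gaussian case is that all cumulant spectral densities of order $\ge 3$ vanish, so in particular $f_4\equiv 0$ and the double-integral term in the covariance asymptotics disappears; what survives is precisely $2\pi e(h)k_ik_i!k_jk_j!\int_\varLambda\varphi_i(\lambda)\bigl[\bar\varphi_j(\lambda)+\bar\varphi_j(-\lambda)\bigr]f^{k_i+k_j}(\lambda)\,d\lambda=\tilde v_{ij}$. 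This is exactly why the limiting vector is $\tilde\zeta$, with covariance $\tilde v_{ij}$, rather than $\tilde\xi$ with covariance $\tilde w_{ij}$.

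Next I would show that the higher-order joint cumulants vanish. Again by multilinearity, the joint cumulant of order $r$ of the coordinates is a finite linear combination of terms of the form $T^{r/2}\,\mathit{cum}\bigl(J_{k_{i_1},T}(\varphi_{i_1}),\ldots,J_{k_{i_r},T}(\varphi_{i_r})\bigr)$, and part~(4) of Theorem~\ref{thm2} gives that each such normalized cumulant tends to $0$ for $r\ge 3$. Combining this with the second-order computation and the cumulant criterion yields the first assertion, $T^{1/2}(J_{k_1,\ldots,k_m,T}-EJ_{k_1,\ldots,k_m,T})\overset{D}{\rightarrow}\tilde\zeta$.

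For the second assertion I would write $T^{1/2}(J_{k_1,\ldots,k_m,T}-\tilde J_{k_1,\ldots,k_m})=T^{1/2}(J_{k_1,\ldots,k_m,T}-EJ_{k_1,\ldots,k_m,T})+T^{1/2}(EJ_{k_1,\ldots,k_m,T}-\tilde J_{k_1,\ldots,k_m})$; under Assumption~\ref{assump4}, applied with $k=k_i$ and $\varphi=\varphi_i$, the deterministic bias term tends to $0$ in each coordinate, so Slutsky's theorem together with the first assertion gives convergence to the same limit $\tilde\zeta$. I expect the main obstacle to be technical rather than structural: one must check that the single integrability relation $\frac1q+\min\{k_i\}\frac1p=\frac12$, together with the finiteness of $\varLambda$ and the inclusions $L_{p'}(\varLambda)\subset L_{p}(\varLambda)$, actually delivers the exponent balances required to invoke parts~(2) and~(4) of Theorem~\ref{thm2} for \emph{every} mixed second- and higher-order term, where the averaged powers $\frac{k_i+k_j}{2}$ and $\frac{k_{i_1}+\cdots+k_{i_r}}{r}$ enter the Hölder estimates and must each be controlled by the available integrability of $f$ and the $\varphi_i$.
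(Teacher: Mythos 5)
Your overall structure is exactly the paper's intended derivation: the paper gives no separate proof of Theorem~\ref{thm6} at all, presenting it (like Theorems~\ref{thm3}--\ref{thm5}) as a corollary of Theorem~\ref{thm2} via the method of cumulants, and your steps --- multilinearity reducing the second-order joint cumulants to $T\,\mathit{cov}(J_{k_i,T}(\varphi_i),J_{k_j,T}(\varphi_j))$ handled by part~\ref{thm2.2} of Theorem~\ref{thm2}, part~\ref{thm2.4} for orders $r\geq3$, the vanishing of $f_4$ explaining why the limit is $\tilde\zeta$ rather than $\tilde\xi$, and Assumption~\ref{assump4} plus Slutsky for replacing $EJ$ by $\tilde J$ --- are precisely the intended ones.

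However, the verification you defer in your last paragraph is not a technicality; it is where the argument actually breaks, and it cannot be repaired under the hypothesis as stated. On the finite measure space $\varLambda$ the nesting is $L_{p}(\varLambda)\subset L_{p'}(\varLambda)$ for $p'\leq p$, so from $f\in L_p$ and $\varphi_i\in L_q$ you may only \emph{lower} exponents. This allows the equalities in Theorem~\ref{thm2} to be relaxed to inequalities in one direction only: part~\ref{thm2.2} is applicable whenever $\frac{1}{q}+\frac{k_i+k_j}{2}\cdot\frac{1}{p}\leq\frac{1}{2}$, and part~\ref{thm2.4} whenever $\frac{1}{q}+\frac{k_{i_1}+\cdots+k_{i_r}}{r}\cdot\frac{1}{p}\leq\frac{1}{2}$. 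But the hypothesis of Theorem~\ref{thm6} pins the balance at $\min\{k_i\}$, and since $\frac{k_i+k_j}{2}\geq\min\{k_i\}$ and $\frac{k_{i_1}+\cdots+k_{i_r}}{r}\geq\min\{k_i\}$, every mixed term with unequal indices and $p<\infty$ satisfies $\frac{1}{q}+\frac{k_i+k_j}{2}\cdot\frac{1}{p}>\frac{1}{2}$ --- the wrong direction, so Theorem~\ref{thm2} cannot be invoked. This is not merely a failure of the method but of the statement: taking $f(\lambda)\sim|\lambda|^{-a}$ with $a$ close to $1/p$ and $\varphi_i=\varphi_j\sim|\lambda|^{-b}$ with $b$ close to $1/q$ makes the entry $\tilde v_{ij}$, which involves $\int_{\varLambda}\varphi_i\bar\varphi_jf^{k_i+k_j}\,d\lambda$, infinite, so the asserted Gaussian limit does not exist. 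The conclusion you should have drawn is that the hypothesis must read $\frac{1}{q}+\max\{k_i\}\frac{1}{p}=\frac{1}{2}$ (the $\min$ in the paper is evidently a slip, invisible only because no proof is given): with $\max$, every mixed balance is $\leq\frac{1}{2}$, the exponent-lowering device makes parts~\ref{thm2.2} and~\ref{thm2.4} applicable to every term, and your proof then goes through verbatim.
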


\begin{remark}
Integrals of nonlinear functions of the periodogram (including, in
particular, powers of positive orders of the periodogram) were
studied, for example, in {\rm\cite{Taniguchi80}} for discrete time
processes under the assumption of boundedness of the spectral
density. In {\rm\cite{DeoChen}}, the integral functionals of the squared
periodogram were studied for stationary Gaussian series given by
the moving-average representation, and the asymptotic normality result
was stated under the particular assumption of summability of the
coefficients of the representation and continuity of the
derivative of the spectral density. In {\rm\cite{Chiu}} and
{\rm\cite{McElroy}}, the asymptotic results for functionals of powers of
the periodogram of general order have been studied under
the conditions of summability of cumulants of the process. In this
paper, we state the results for discrete-time non-Gaussian
processes under the condition of boundedness of spectral densities
of all orders $($which are supposed to exist$)$, and we also derive the
results for Gaussian case under the conditions of integrability of
the spectral density and weight function.\looseness=1
\end{remark}

\begin{remark}
Conditions on the spectral density under which Assumption~\ref{assump4} will
be satisfied can be formulated analogously to the corresponding
conditions in {\rm\cite{AnhLS2007b}} for the case where $h(t)\equiv1$
and analogously to the conditions in {\rm\cite{Bias}} for the general
$h(t)$ of Assumption~\ref{assumpH}.
\end{remark}

\begin{remark}
The results on asymptotic properties of the integrals of the
powers of the periodogram can be useful for some problems of
statistical inference. One possible application is hypothesis
testing concerning the form of the spectral density of the
process. For example, in {\rm\cite{DeoChen}}, a quadratic
goodness-of-fit test in the spectral domain was studied for Gaussian
processes. Note that the asymptotic normality result for the
corresponding test statistic stated in {\rm\cite{DeoChen}} can be also
derived from our Theorem~\ref{thm6}, that is, under a different set of
conditions. More applications of the integrals of the powers of
the periodogram for goodness-of-fit testing, peak testing, and
assessing model misspecification are presented in {\rm\cite{McElroy}}.
In {\rm\cite{Chiu}} and {\rm\cite{Sakhno}}, the integrals of the squared
periodogram were applied for parametric estimation in the
spectral domain.
\end{remark}
%

\section{Proofs}\label{sec3}

For the proofs of the results of Section~\ref{sec2}, we use the technique
based the properties of the multidimensional kernels of Fej\'{e}r
type (see, e.g., \cite{Bentkus1976,AnhLS2007b}, and
references therein) and the H\"{o}lder--Young--Brascamp--Lieb
inequality (see \cite{AvramLS2010,AvramLS2012}, and
references therein; see also \cite{Sakhno}). In what follows, we
will refer to the latter as the HYBL inequality. The application of
these tools leads to very transparent and elegant proofs. We will
also use the formula giving expressions for cumulants of products
of random variables via products of cumulants of the individual
variables (see, e.g., \cite{LeonovShiryaev,Bentkus1976})
and the multilinearity property of cumulants. The lines of reasonings
are very close to those used in~\cite{Sakhno}.

\begin{proof}[Proof of Theorem \ref{thm1}]
Consider
\begin{align*}
EI_{T}^{k}(\lambda)
&{}=\frac{1}{(2\pi H_{2,T}(0))^{k}}E \bigl[(\mathit{cum}
\bigl(d_{T}(\lambda)d_{T}(-\lambda) \bigr)^{k}\bigr]
\\
&{}=\frac{1}{(2\pi H_{2,T}(0))^{k}}E \bigl[\mathit{cum}(d_{T}(\lambda)d_{T}(-
\lambda)\cdots \mathit{cum}(d_{T}(\lambda)d_{T}(-\lambda) \bigr].
\end{align*}
We apply now the formula for cumulants of products of random variables (see,
e.g., \cite{LeonovShiryaev}); it is convenient to assign the indices to
$\lambda$s in the following way: we can enumerate all $\lambda$s
appearing in the above row from $1$ to $2k$, having in mind that $\lambda_{i}$ with odd indices are simply equal to $\lambda$,
whereas $\lambda_{i}$ with even $i$ are equal to
$-\lambda$. Then we can write down the expectation in the
following form:
\begin{align}
EI_{T}^{k}(\lambda)
&{}=\frac{1}{(2\pi H_{2,T}(0))^{k}}
\sum_{
\substack{
\nu=(\nu_{1},\ldots,\nu_{1}) \\
\text{partition of }(1,\ldots,2k)}
}
\prod_{l=1}^{p}\mathit{cum}
\bigl(d_{T}(\lambda_{i}),i\in\nu_{l} \bigr)
\nonumber\\
&\quad{}\times\prod_{i=1}^{k}\delta(\lambda
_{2i-1}-\lambda) \prod_{i=1}^{k}
\delta(\lambda_{2i}+\lambda). \label{eitk}
\end{align}

The cumulants of the finite Fourier transforms $d_{T}(\lambda)$,
$\lambda
\in\varLambda$, can be written as follows:
\begin{align*}
&\mathit{cum} \bigl(d_{T}(\alpha_{1}),\ldots,d_{T}(\alpha_{k})\bigr)\nonumber\\
&\quad{}=\int_{K_{T}^{k}}\prod_{i=1}^{k}h_{T}(t_{i})e^{-i\Sigma_{1}^{k}\alpha_{j}t_{j}}\;
c_{k}(t_{1}-t_{k},\ldots,t_{k-1}-t_{k})\,dt_{1}\ldots dt_{k}\nonumber\\
&\quad{}=\int_{\varLambda^{k-1}}f_{k}(\gamma_{1},\ldots,\gamma_{k-1} )\nonumber\\
&\qquad{}\times\prod_{j=1}^{k-1}H_{1,T}(\gamma_{j}-\alpha_{j})H_{1,T} \Biggl(-\sum
_{j=1}^{k-1}\gamma_{j}-\alpha_{k} \Biggr) \,d\gamma_{1}\ldots d\gamma_{k-1},
\end{align*}
where
\begin{equation*}
H_{1,T}(\lambda) =\int_{K_{T}}h_{T}(t)e^{-it\lambda}\,dt.
\end{equation*}

Correspondingly, we obtain the following formula for the expectation
of~$J_{k,T}(\varphi)$:
\begin{align}
&EJ_{k,T}(\varphi)=E\int_{\varLambda}\varphi(\lambda)
I_{T}^{k}(\lambda) \,d\lambda\notag
\\
&\quad{}=\int_{\varLambda}\varphi(\lambda) \frac{1}{(2\pi
H_{2,T}(0))^{k}}\sum
_{\substack{ \nu=(\nu_{1},\ldots,\nu_{p}) \\
\text{partition of }(1,\ldots,2k)}}\int_{{\varLambda}^{2k-p}}\prod
_{i=1}^{p}f_{\llvert
\nu_{i}\rrvert }(\gamma_{j},j
\in\tilde{\nu}_{i})\notag
\\
&\qquad{}\times\prod_{j=1}^{2k}H_{1,T}(\gamma_{j}-\lambda_{j})\prod_{l=1}^{p}
\delta \biggl(\sum_{j\in\nu_{l}}\gamma_{j} \biggr)
\prod_{i=1}^{k}\delta(\lambda_{2i-1}-
\lambda) \prod_{i=1}^{k}\delta(\lambda
_{2i}+\lambda)\, d\gamma^{\prime
}\,d\lambda. \label{ejkt}
\end{align}

Here and in similar formulas below, we use the following notation:
for a~set of natural numbers $\nu$, we denote by $\llvert \nu
\rrvert $ the number of elements in $\nu$ and by
$\widetilde{\nu}$ the subset of $\nu$ that contains
all elements of $\nu$
except the last one. Integration in the inner integral in the above formula
is understood with respect to $(2k-p)$-dimensional vector $\gamma^{\prime}$ obtained from the vector
$\gamma=(\gamma_{1},\ldots,\gamma_{2k})$ due to $p$ restrictions on
the variables $\gamma_{j}$, $j=1,\ldots,2k$, described by the
Kronecker delta functions $\delta$.

Now we note that the products $\prod_{j=1}^{k}H_{1,T}(\lambda
_{j})$ in the case where $\sum_{j=1}^{k}\lambda_{j}=0$ give rise to
a class of $\delta$-type kernels (or Fej\'{e}r-type kernels). Namely, if
Assumption~\ref{assumpH} holds and $H_{k,T}(0)\neq0$, then
%
\begin{equation}
\varPhi_{k,T}^{h}(\lambda_{1},\ldots,\lambda
_{k-1}):=\frac{1}{(2\pi)^{k-1}H_{k,T}(0)}\prod_{j=1}^{k-1}H_{1,T}
(\lambda_{j})H_{1,T} \Biggl(-\sum
_{j=1}^{k-1}\lambda_{j} \Biggr)\label{kernel}
\end{equation}
is a kernel over $\varLambda^{k-1}$, which is an approximate
identity for convolution (see, e.g., \cite{Dahlhaus}), and
%
\begin{align}
&\underset{T\rightarrow\infty} {\lim}\int_{{\varLambda
}^{k-1}}G(u_{1}-v_{1},\ldots,u_{k-1}-v_{k-1})\varPhi_{k,T}^{h}(u_{1},\ldots,u_{k-1})
\,du_{1}\ldots du_{k-1}\notag
\\
&\quad{}=G(v_{1},\ldots,v_{k-1}), \label{convol}
\end{align}
provided that the function $G(\cdot,\ldots,\cdot)$ is
bounded and continuous at the point $(v_{1},\ldots,\allowbreak v_{k-1})$.

The asymptotic behavior of the right-hand side of (\ref{ejkt}) can be
evaluated basing on the property (\ref{convol}).

Let us first consider the partitions $\nu$ composed by pairs. For those partitions, when the products of only the
cumulants of the form $\mathit{cum}(d_{T}(\lambda),d_{T}(-\lambda))$
appear in~(\ref{eitk}), we obtain under the integral sign in
(\ref{ejkt}) the terms of the form
\begin{align*}
&\frac{1}{(2\pi H_{2,T}(0))^{k}} \biggl\{ \int f(\gamma)H_{1,T}(\gamma -\lambda
)H_{1,T}(-\gamma+\lambda)\,d\gamma \biggr\} ^{k}\\
&\quad{}= \biggl\{ \int f(\gamma)\varPhi_{2,T}^{h}(\gamma-\lambda )\,d
\gamma \biggr\} ^{k}.
\end{align*}
We note that there are $k!$ such terms, therefore, in the
expression for $EJ_{k,T}(\varphi)$, we have the term
%
\begin{equation}
k!\int_{\varLambda}\varphi(\lambda) \biggl\{ \int
_{\varLambda
}f(\gamma)\varPhi_{2,T}^{h}(\gamma-
\lambda)d\gamma \biggr\} ^{k}\,d\lambda, \label{main}
\end{equation}
and this is the only case where we have $k$ kernels, and all $k$
factors $\frac{1}{2\pi H_{2,T}(0)}$ are used to compose these kernels $\varPhi
_{2,T}^{h}(\cdot)$.

In all other partitions, we will be able to compose from 1 to $k-1$ kernels
taking combination of $H_{1,T}(\cdot)$ with suitable arguments: for each
2nd-order kernel, we will use one of the factors $\frac{1}{2\pi
H_{2,T}(0)}$ from $\frac{1}{(2\pi H_{2,T}(0))^{k}}$; otherwise, when
composing the $l$th order kernel with $l\neq2$, we will need the
normalizing factor $\frac{1}{(2\pi)^{l-1}H_{l,T}(0)}$, and
therefore we will modify the factor $\frac{1}{(2\pi H_{2,T}(0))^{k}}$
by taking, instead, $\frac{(2\pi)^{l-1}H_{l,T}(0)}{(2\pi H_{2,T}(0))^{k}}$.

So, for those partitions, when we compose kernels of orders, say,
$l_{1},\ldots,l_{r}$, with $\sum_{i=1}^{r}l_{i}=2k$, the corresponding
integral in (\ref{ejkt}) will be represented in the form of a generalized
convolution of some product of spectral densities of different
orders with the product of kernels of orders $l_{1},\ldots,l_{r}$,
and the factor
%
\begin{equation}
\frac{\prod_{i=1}^{r}(2\pi)^{l_{i}-1}H_{l_{i},T}(0)}{(2\pi
H_{2,T}(0))^{k}} \label{factor}
\end{equation}
will be supplied to the integral. For example, in the simplest case
where $p=1 $, the corresponding term in (\ref{ejkt}) can be represented
as follows:
\begin{align*}
&\frac{(2\pi)^{2k-1}H_{2k,T}(0)}{(2\pi H_{2,T}(0))^{k}}%
\int_{\varLambda}\varphi(\lambda)
\int_{\varLambda
^{2k-1}}f_{2k}(\gamma_{1},\ldots,\gamma
_{k-1})
\\
&\quad{}\times\varPhi_{2k,T}^{h}(\gamma_{1}-\lambda
_{1},\ldots,\gamma_{2k-1}-\lambda_{2k-1})
\\
&\quad{}\times\prod_{i=1}^{k}\delta(\lambda
_{2i-1}-\lambda) \prod_{i=1}^{k}
\delta(\lambda_{2i}+\lambda) \prod_{i=1}^{2k-1}d
\gamma_{i}\,d\lambda.
\end{align*}

Now we take into account the following asymptotics for $H_{k,T}(0)$: $%
H_{k,T}(0)\sim TH_{k}(0)$, where $H_{k}(0)=\int h^{k}(\lambda)\,d\lambda$, and conclude that, in the case of $r$ kernels, $1\leq r\leq
k-1$, the factor (\ref{factor}) is asymptotically of order $\frac
{1}{T^{k-r}}%
$; the corresponding integrals containing these kernels will converge to
finite limits under the conditions of the theorem according to (\ref{convol}).
Therefore, the expectation is obtained as the limit of~(\ref{main}). This
gives statement \ref{thm1.1} of the theorem.

Consider
\begin{align}
&\mathit{cov} \bigl(J_{k,T}(\varphi_{1}),J_{l,T}(\varphi
_{2}) \bigr)\notag\\
&\quad{} =\frac{1}{(2\pi
H_{2,T}(0))^{k+l}}\notag
\\
&\qquad{}
\times\int_{\varLambda^{2}}
\varphi_{1}(\alpha)
\overline{\varphi_{2}}(\beta)
\mathit{cum}\bigl(
\bigl(d_{T}(\alpha)d_{T}(-\alpha) \bigr)^{k},
\bigl(d_{T}(\beta)d_{T}(-\beta) \bigr)^{l}
\bigr)
\,d\alpha \,d\beta. \label{covar}
\end{align}
The cumulant under the integral sign in (\ref{covar}) according to the
formula for calculation of cumulants of products of random variables
can be
written in the form
%
\begin{equation}
\sum_{\nu=(\nu_{1},\ldots,\nu_{p})}\prod_{i=1}^{p}\mathit{cum}
\bigl(d_{T}(\mu_{j}),\mu_{j}\in\nu
_{i} \bigr), \label{sumcum}
\end{equation}
where the summation is taken over all indecomposable partitions $\nu= (\nu_{1},\ldots,\nu_{p})$, $|\nu_{i}|>1$, of the table $T_{2}$ with
two rows, $\{\alpha,-\alpha,\ldots,\alpha,-\alpha\}$ (of length
$2k$) and $%
\{\beta,-\beta,\ldots,\beta,-\beta\}$ (of length $2l$). For
asymptotic analysis of expression~(\ref{covar}), we can use the
reasonings analogous to those for the case of functionals of
squared periodogram in \cite{Sakhno}, but now, dealing with the
tapered case, we need to keep track of normalizing factors for appearing
kernels. Again, similarly to the previous consideration of
the expectation, we analyze all possible partitions and
kernels that can be composed for every particular partition. Let us
first consider the terms in (\ref{sumcum}) that correspond to
partitions by pairs, that is,
%
\begin{equation}
\prod_{i=1}^{k+l}\mathit{cum} \bigl(d_{T}(\mu_{i}),d_{T}(\lambda_{i}) \bigr),
\label{cum4}
\end{equation}
where $\mu_{i},\lambda_{i}\in\{\alpha,-\alpha,\beta,-\beta\}$,
and $%
\nu=\{(\mu_{i},\lambda_{i}),i=1,\ldots,k+l\}$ forms an~indecomposable
partition of the table $T_{2}$.

In the case where we have the $k-1$ cumulants $\mathit{cum}(d_{T}(\alpha
),d_{T}(-\alpha))$ and $l-1$ cumulants $\mathit{cum}(d_{T}(\beta),d_{T}(-\beta))$
in the product~\eqref{cum4}, according to formula~(\ref{kernel}), we
can compose $%
k+l-1$ kernels ($k+l-2$ kernels of order 2 and one of order~4), and
the factor before the integral in (\ref{covar}) becomes of the form
$\frac{%
2\pi H_{4,T}(0)}{(H_{2,T}(0))^{2}}$, which is asymptotically of order
$\frac{%
1}{T}$. Only the terms of this kind in (\ref{sumcum}) give the main
contribution (of order $\frac{1}{T}$) into the covariance (\ref
{covar}); all
other terms in (\ref{cum4}) produce a smaller-order contribution to the
covariance (\ref{covar}%
). More precisely, in order to describe the asymptotics of
the covariance, we have to consider, among the terms in (\ref{sumcum}), the
following ones:
%
\begin{align}
&\bigl(\mathit{cum} \bigl(d_{T}(\alpha),d_{T}(-\alpha) \bigr)
\bigr) ^{k-1} \bigl(\mathit{cum} \bigl(d_{T}(\beta),d_{T}(-
\beta) \bigr) \bigr) ^{l-1}\notag\\
&\quad{}\times{} \bigl[ \mathit{cum} \bigl(d_{T}(\alpha),d_{T}(\beta)
\bigr)\mathit{cum} \bigl(d_{T}(-\alpha),d_{T}(-\beta) \bigr)\notag
\\
&\quad{}+\mathit{cum} \bigl(d_{T}(\alpha),d_{T}(-\beta) \bigr)\mathit{cum}
\bigl(d_{T}(-\alpha),d_{T}(\beta) \bigr) \bigr]. \label{cummain}
\end{align}
Their contribution to the covariance is of the form
\begin{align*}
&\frac{1}{(2\pi H_{2,T}(0))^{k+l}}\int\int\varphi_{1}(\alpha) \overline{%
\varphi_{2}}(\beta)\\
&\qquad{}
\times \biggl[
\int f(\gamma_{1})H_{1,T}(\gamma_{1}-\alpha)H_{1,T}(-\gamma_{1}+\alpha)\,d\gamma_{1}
\biggr] ^{k-1}
\\
&\qquad{}\times \biggl[
\int f(\gamma_{2})H_{1,T}(\gamma_{2}-\beta)H_{1,T}(-\gamma_{2}+\beta)\,d\gamma_{2}
\biggr] ^{l-1}
\\
&\qquad{}\times \biggl[
\int f(\gamma_{3})H_{1,T}(\gamma_{3}-\alpha )H_{1,T}(-\gamma_{3}-\beta)\,d\gamma_{3}
\\
&\qquad{}\times\int f(\gamma_{4})H_{1,T}(\gamma_{4}+
\alpha )H_{1,T}(-\gamma _{4}+\beta)\,d\gamma_{4}
\\
&\qquad{}+\int f(\gamma_{3})H_{1,T}(\gamma_{3}-
\alpha)H_{1,T}(-\gamma _{3}+\beta )\,d\gamma_{3}
\\
&\qquad{}\times \int f(\gamma_{4})H_{1,T}(\gamma_{4}+
\alpha )H_{1,T}(-\gamma_{4}-\beta)\,d\gamma_{4}
\biggr] \,d\alpha \,d\beta
\\
&\quad{}=\frac{2\pi H_{4,T}(0)}{(H_{2,T}(0))^{2}}\int\int\varphi _{1}(\alpha)%
\overline{\varphi_{2}}(\beta) \biggl[ \int f(\gamma_{1})
\varPhi _{2,T}^{h}(\gamma_{1}-\alpha)\,d
\gamma_{1} \biggr] ^{k-1}
\\
&\qquad{}\times
\biggl[ \int f(\gamma_{2})\varPhi_{2,T}^{h}(\gamma_{2}-\beta )\,d\gamma _{2} \biggr] ^{l-1}
\\
&\qquad{}\times\int\int f(\gamma_{3})f(\gamma_{4}) \bigl[\varPhi
_{4,T}^{h}(\gamma _{3}-\alpha,-
\gamma_{3}-\beta,\gamma_{4}+\alpha)
\\
&\qquad{}+\varPhi_{4,T}^{h}(\gamma_{3}-\alpha,-
\gamma_{3}+\beta,\gamma _{4}+\alpha)%
\bigr]\,d
\gamma_{3}\,d\gamma_{4}\,d\alpha \,d\beta.
\end{align*}
Taking into account formula (\ref{convol}), we can evaluate the latter as
\begin{equation*}
\sim\frac{2\pi}{T}\frac{\int h^{4}(t)\,dt}{(\int h^{2}(t)\,dt)^{2}%
}\int_{\varLambda}\varphi
_{1}(\lambda) \bigl[\overline{\varphi_{2}}(\lambda) +
\overline{\varphi_{2}}(-\lambda) \bigr] f^{k+l}(\lambda) \,d\lambda\quad \mbox{\rm as}\ T\rightarrow\infty.
\end{equation*}
We note also that there are $klk!l!$ terms of the form (\ref{cummain})
in (%
\ref{sumcum}).

Let us now consider the terms in (\ref{sumcum}) that correspond to other
partitions. We can see that one more possibility is left to compose $k+l-1$
kernels, namely, the case of the terms of the form
\begin{align*}
&
\bigl[
\mathit{cum} \bigl(d_{T}(\alpha),d_{T}(-\alpha) \bigr)
\bigr]^{k-1}
\bigl[
\mathit{cum} \bigl(d_{T}(\beta),d_{T}(-\beta) \bigr)
\bigr]^{l-1}
\\
&\quad{}\times
\mathit{cum} \bigl(d_{T}(\alpha),d_{T}(-\alpha),d_{T}(\beta),d_{T}(-\beta) \bigr)
\end{align*}
in the sum (\ref{sumcum}) (there are $klk!l!$ such terms), and the
corresponding contribution to the covariance (\ref{covar}) is of
the form
\begin{align*}
&\frac{1}{(2\pi H_{2,T}(0))^{k+l}}\int\int\varphi_{1}(\alpha) \overline{%
\varphi_{2}}(\beta)
\\
&\qquad{}\times \biggl[ \int f(\gamma_{1})H_{1,T}(\gamma
_{1}-\alpha)H_{1,T}(-\gamma_{1}+\alpha)\,d\gamma
_{1} \biggr] ^{k-1}
\\
&\qquad{}\times \biggl[ \int f(\gamma_{2})H_{1,T}(\gamma
_{2}-\beta)H_{1,T}(-\gamma_{2}+\beta)\,d\gamma
_{2}%
 \biggr] ^{l-1}\\
&\qquad{}\times\int\int\int f_{4}(\mu_{1},\mu_{2},
\mu_{3})H_{1,T}(\mu_{1}-\alpha)H_{1,T}(\mu_{2}+\alpha)
\\
&\qquad{}\times H_{1,T}(\mu_{3}-\beta)H_{1}^{T}%
 \Biggl(-\sum_{i=1}^{3}\mu_{i}+
\beta \Biggr)\,d\mu_{1}\,d\mu_{2}\,d\mu_{3}\,d\alpha\,d\beta
\\
&\quad{}=\frac{2\pi H_{4,T}(0)}{(H_{2,T}(0))^{2}}\int\int\varphi_{1}(\alpha) 
\overline{
\varphi_{2}}(\beta)\\
&\qquad{}\times\int f(\gamma_{1})\varPhi
_{2,T}^{h}(\gamma_{1}-\alpha)\,d\gamma
_{1}\int f(\gamma_{2})\varPhi_{2,T}^{h}(\gamma_{2}-\beta)\,d\gamma_{2}
\\
&\qquad{}\times\int\int\int f_{4}(\mu_{1},\mu_{2},
\mu_{3})\varPhi_{4,T}^{h}(\mu_{1}-
\alpha,\mu_{2}+\alpha,\mu_{3}-\beta)
\,d\mu _{1}\,d\mu_{2}\,d\mu_{3}\,d\alpha\,d\beta
\\
&\quad{}\sim\frac{2\pi}{T}\frac{\int h^{4}(t)\,dt}{(\int
h^{2}(t)\,dt)^{2}}\int\int\varphi_{1}(\alpha)
\overline{\varphi_{2}}%
(\beta) f(\alpha) f(\beta)
f_{4}(\alpha,-\alpha,\beta)\,d\alpha \,d\beta\quad\! \mbox{\rm as}\,T
\rightarrow\infty.
\end{align*}
In all other cases, we can compose less than $k+l-1$ kernels; the corresponding
integrals will converge to finite limits supplied by the factor of
orders not exceeding~$\frac{1}{T^{2}}$.

Summarizing the above reasonings, we come to the asymptotics for the
covariance as given in statement \ref{thm1.2} of the theorem.

We now evaluate the asymptotic behavior of the cumulant of order $k\geq 3$:
\begin{align*}
&\mathit{cum} \bigl(J_{m_{1},T}(\varphi_{1}),\ldots,J_{m_{k},T}
(\varphi_{k})\bigr)\\
&\quad{} =\frac{1}{(2\pi
H_{2,T}(0))^{M}}\int_{\varLambda^{k}}
\varphi_{1}(\alpha_{1})\cdots\varphi_{k}(\alpha_{k})
\\
&\qquad{}\times \mathit{cum} \bigl(\bigl(d_{T}(\alpha_{1})d_{T}(-
\alpha_{1}) \bigr)^{m_{1}},\ldots, \bigl(d_{T}(\alpha _{k})d_{T}(-\alpha_{k})
\bigr)^{m_{k}} \bigr)\,d\alpha_{1}\ldots d\alpha_{k},
\end{align*}
where $M=\sum_{i=1}^{k}m_{i}$.

The cumulant under the integral sign can be represented as the sum
%
\begin{equation}
\sum_{\nu=(\nu_{1},\ldots,\nu_{p})}\prod_{i=1}^{p}\mathit{cum}
\bigl(d_{T}(\mu_{j}),\mu_{j}\in\nu
_{i} \bigr), \label{sumcumk}
\end{equation}
where the summation now is taken over all indecomposable partitions $\nu
=(\nu_{1},\ldots,\allowbreak\nu_{p})$, $|\nu_{i}|>1$, of the table $T_{k}$
with $k$ rows $\{\alpha_{i},-\alpha_{i},\ldots,\alpha_{i},-\alpha
_{i}\}$, $%
i=1,\ldots,k$, the~length of the $i$th row being $2m_{i}$. Starting
again with
consideration of partitions by pairs, we can see that with these partitions
we can compose at most $\sum_{i=1}^{k}(m_{i}-1)+1=M-k+1$ kernels ($M-k$
kernels $\varPhi_{2,T}^{h}$ and one kernel $\varPhi_{4,T}^{h}$), and
the corresponding
integrals will converge to finite limits supplied with the factor $\frac
{%
(2\pi)^{3}H_{4,T}(0)}{(2\pi H_{2,T}(0))^{M-(M-k)}}=\frac{%
(2\pi)^{3}H_{4,T}(0)}{(2\pi H_{2,T}(0))^{k}}$, which is
asymptotically of order $\frac{1}{T^{k-1}}$. With all other partitions, we
will be able to compose no more that $M-k+1$ kernels; therefore, their
contribution to cumulant (\ref{sumcumk}) will be of order less than $%
\frac{1}{T^{k-1}}$. This gives statement \ref{thm1.3} of the theorem.
\end{proof}

\begin{proof}[Proof of Theorem \ref{thm2}]
We use the same calculations as those in the proof of Theorem~\ref{thm1},
but to analyze the limit behavior of the integrals representing
the cumulants, we will appeal to the HYBL inequality (see
\cite{AvramLS2010,AvramLS2012,Sakhno}). The
reasonings follow the same lines as in \cite{Sakhno}, so here we just
point out the key steps.

For the Gaussian case, we will have only partitions by pairs in (\ref{ejkt}):
\begin{align}
&EJ_{k,T}(\varphi) =E\int_{\varLambda}\varphi(\lambda)I_{T}^{k}(\lambda) \,d\lambda\notag\\
&\quad{}=\int_{\varLambda}\varphi(\lambda) \frac{1}{(2\pi
H_{2,T}(0))^{k}}\sum
_{\substack{ \nu=(\nu_{1},\ldots,\nu
_{k}),|\nu
_{k}|=2, \\
\text{partition of }(1,\ldots,2k)}}\int_{{\varLambda}%
^{k}}\prod
_{i=1}^{k}f(\gamma_{j},j\in\tilde{
\nu}_{i}) \notag
\\
&\qquad{}\times\prod_{j=1}^{2k}H_{1,T}(\gamma_{j}-\lambda_{j})\prod_{l=1}^{k}
\delta \biggl(\sum_{j\in\nu_{l}}\gamma_{j} \biggr)
\prod_{i=1}^{k}\delta(\lambda_{2i-1}-
\lambda) \prod_{i=1}^{k}\delta(\lambda
_{2i}+\lambda)\, d\gamma^{\prime
}\,d\lambda.\label{ejktgauss}
\end{align}

We consider separately the term (\ref{main}):
\begin{eqnarray*}
&&k!\int_{\varLambda}\varphi(\lambda) \biggl\{ \int
_{\varLambda}f(\gamma)\varPhi_{2,T}^{h}(\gamma-
\lambda)d\gamma \biggr\} ^{k}\,d\lambda
\\
&&\quad{}=k!\int_{\varLambda^{k}} \Biggl[ \int_{\varLambda}\varphi(\lambda) \prod_{j=1}^{k}f(\gamma
_{j}-\lambda)\,d\lambda \Biggr] \prod_{j=1}^{k}
\varPhi_{2,T}^{h}(\gamma_{j})\prod
_{j=1}^{k}d\gamma_{j}.
\end{eqnarray*}
Note that the convergence to the finite limit $k!\int_{\varLambda
}\varphi(\lambda)f^{k}(\lambda)\,d\lambda$ will be
assured if we assume the conditions for statement \ref{thm2.1} of the
theorem.

Now consider the remaining terms: we have the integrals over
$\varLambda^{k+1}$ with integrands composed by products of the
functions $\varphi$ with $k$ functions $f$ and $2k$ functions
$H_{1,T}$ with some linear relations between the arguments of
these functions; these integrals are supplied with the factor
$\frac{1}{(2\pi H_{2,T}(0))^{k}}$.

Applying the HYBL inequality, we can bound each such integral by the
expression\vspace*{6pt}
%
\begin{equation}
\frac{1}{(2\pi H_{2,T}(0))^{k}}const\Vert\varphi\Vert_{q}\Vert f\Vert
_{p}^{k}\Vert H_{1,T}\Vert_{r}^{2k},
\label{bound1}
\end{equation}
provided that $\varphi(\lambda)\in L_{q}(\varLambda)$, $f(\lambda)\in L_{p}(\varLambda)$, and $H_{1,T}(\lambda)\in
L_{r}(\varLambda)$ with
\begin{equation*}
\frac{1}{q}+k\frac{1}{p}+2k\frac{1}{r}=k+1.
\end{equation*}

If we choose $r=2$ and take into account that, under Assumption~\ref{assumpH}, we
have $\Vert H_{1,T}\Vert_{r}\leq CT^{1-1/r}$ and $H_{1,T}(0)\sim T$,
then from (\ref{bound1}) we arrive at the bound $const\Vert\varphi
\Vert_{q}\Vert f\Vert_{p}^{k}$ as $T\rightarrow\infty$, with
the restrictions on $p$ and $q$ as in statement~\ref{thm2.1} of the
theorem. From this point we can repeat the same arguments as in
\cite{Sakhno} to show that, in fact, this bound can be strengthen
to $o(1)$ as $T\rightarrow\infty$.

The similar reasonings are applied to derive statements \ref{thm2.2}--\ref{thm2.4}
of Theorem~\ref{thm2}.
\end{proof}
\vspace*{3pt}

\end{document}